\newtheorem{theorem}{Theorem}
\newtheorem{corollary}[theorem]{Corollary}%
\newtheorem{claim}{Claim}%
\theoremstyle{remark}%
\newtheorem{remark}{Remark}%
\begin{document}

\title{Exceptional or half-integral chirally cosmetic surgeries}

\author{Kazuhiro Ichihara}
\address{Department of Mathematics, College of Humanities and Sciences, Nihon University, 3-25-40 Sakurajosui, Setagaya-ku, Tokyo 156-8550, JAPAN}
\email{ichihara.kazuhiro@nihon-u.ac.jp}

\author{Toshio Saito}
\address{Department of Mathematics, Joetsu University of Education, 1 Yamayashiki, Joetsu 943-8512, JAPAN}
\email{toshio@juen.ac.jp}

\dedicatory{Dedicated to Professor Mario Eudave-Mu\~{n}oz on his 60th birthday}

\begin{abstract}
A pair of Dehn surgeries on a knot is called chirally cosmetic if they yield orientation-reversingly homeomorphic 3-manifolds.
In this paper, we consider exceptional or half-integral chirally cosmetic surgeries, and obtain several restrictions.
\end{abstract}

\keywords{Dehn surgery, cosmetic surgery, exceptional surgery, trefoil, figure-eight knot}

\subjclass[2020]{57K30, 57K35}

\maketitle

\section{Introduction}\label{sec1}

Generically, the manifolds obtained by Dehn surgeries on a knot are expected to be all distinct. 
However there can exist a pair of such manifolds which are homeomorphic to each other. 
Known such examples are all given by pairs of orientation-reversingly homeomorphic manifolds (mirror images). 
In view of this, a pair of Dehn surgeries on a knot yielding orientation-reversingly homeomorphic manifolds is called \textit{chirally cosmetic}. 
(If the homeomorphism is orientation-preserving, they are called \textit{purely cosmetic}.)

In fact, chirally cosmetic surgeries can occur for some knots in the 3-sphere $S^3$.
The well-known examples are given by the trefoil and the figure-eight knot.

The trefoil is known as the first example of a knot admitting non-trivial chirally cosmetic surgeries.
The prominent ones are 9- and 9/2-surgeries on the right-hand trefoil which create Seifert fibered spaces with opposite orientations.
In Section 2, we show that this example is very special in a sense.
It might be conjectured that only the trefoil can admit such integral and half integral chirally cosmetic surgeries among knots in $S^3$.

The figure-eight knot is actually an amphicheiral knot, i.e., it is ambient isotopic to its mirror image.
Thus it naturally admits chirally cosmetic surgeries.
In fact, for an amphicheiral knot, $r$- and ($-r$)-surgeries give a pair of 3-manifolds which are mirror images of each other for any $r \in \mathbb{Q}$.
On the other hand, the figure-eight knot is well-known in the study of the exceptional surgeries on hyperbolic knots.
For instance, it is recently proved in \cite[Theorem 1.7]{gabai2021hyperbolic} that the figure-eight knot is the unique hyperbolic knot with nine or more exceptional surgeries (actually 10).
It follows that there are 4 pairs of chirally cosmetic exceptional surgeries on the figure-eight knot.
However those are only known examples of chirally cosmetic exceptional surgeries on hyperbolic knots in $S^3$.
In Section 3, we show that this example is also very special in a sense.
Again it might be conjectured that only the figure-eight knot can admit such exceptional chirally cosmetic surgeries among hyperbolic knots in $S^3$.

\begin{remark}
It is conjectured that there are no chirally cosmetic surgeries on a knot $K$ whenever $K$ is a non-trivial knot in $S^3$ other than $(2,p)$-torus knots and amphicheiral knots. (See \cite{IchiharaItoSaito2}.) 
It is also conjectured that there are no purely cosmetic surgeries whenever $K$ is a non-trivial knot in $S^3$. (This is the purely cosmetic surgery conjecture raised in \cite{StipsiczSzabo}.) 
See \cite{Kirby} for more general cases. 
\end{remark}

We here recall some terminology and set up our notations.
Given a 3-manifold $M$ and a knot $K$ in $M$, \textit{Dehn surgery} on a knot $K$ is an operation to make a 3-manifold as $( M - \mathrm{int}N(K) ) \cup_f D^2 \times S^1$, where $N(K)$ denotes a regular neighborhood of $K$. 
Remark that the gluing map $f$ is determined by the isotopy class of a loop on $\partial N(K)$ identified with the meridian of the attached solid torus, called the \textit{surgery slope}.
For knots in $S^3$, or more generally for knots in an integral homology sphere, such surgery slopes can be naturally identified with rational numbers.
See \cite{Rolfsen} for more detail.
Let $K(p/q)$ denote the 3-manifold obtained by Dehn surgery on a knot $K$ in $S^3$ along slope $p/q$.
For oriented 3-manifolds $M$ and $N$, we mean by $M \cong N$ that $M$ and $N$ are orientation-preservingly homeomorphic, and by $-M$ the manifold orientation-reversingly homeomorphic to $M$.
With these notation, Dehn surgeries on a knot $K$ along slopes $p/q$ and $p'/q'$ are chirally cosmetic if and only if $K(p/q) \cong -K(p'/q')$.
Remark that, by homological reason, $p=p'$ must hold in this case.

\section{Half-integral chirally cosmetic surgeries}

One of the well-known non-trivial examples of chirally cosmetic surgeries is given by the Dehn surgeries on the right-hand trefoil in the 3-sphere along slopes $9$ and $9/2$.
See \cite{Mathieu}.
In this section, we show that such chirally cosmetic surgeries along integral and half-integral slopes on knots are quite restricted.

First we have the following.

\begin{theorem}\label{thm1}
Let $K$ be a knot of genus $g$ in the 3-sphere $S^3$. 
For $p>0$, if $K(p) \cong - K(p/2)$, then $p \le 7 g+2$.
\end{theorem}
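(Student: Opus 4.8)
The plan is to extract a single numerical identity from an orientation-sensitive invariant and then play it against a genus-controlled estimate. The natural first invariant is the Casson--Walker invariant $\lambda$, which changes sign under orientation reversal, $\lambda(-Y)=-\lambda(Y)$, and has an explicit Dehn surgery formula
\[
\lambda(K(p/q)) = \frac{q}{2p}\,\Delta_K''(1)\;-\;\tfrac12\, s(q,p),
\]
where $s(q,p)$ is the Dedekind sum recording the contribution of the lens space $L(p,q)$. First I would note that $p$ must be odd (otherwise $K(p)$ and $K(p/2)$ have first homology of different orders), then specialize to $q=1$ and $q=2$, impose $\lambda(K(p))=-\lambda(K(p/2))$, and solve for $\Delta_K''(1)$. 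Using $s(1,p)=\frac{(p-1)(p-2)}{12p}$ together with $s(2,p)=\frac{(p-1)(p-5)}{24p}$ for odd $p$ (which follows from Dedekind reciprocity and $s(p,2)=0$), everything collapses to the clean identity
\[
\Delta_K''(1) = \frac{(p-1)(p-3)}{24}.
\]
As a sanity check, the right-hand trefoil has genus $1$ and $\tfrac12\Delta_K''(1)=1$, matching $p=9=7\cdot 1+2$; so the bound should be sharp exactly at the motivating example.

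Next I would translate the target into a statement purely about $\Delta_K''(1)$. Writing $(p-1)(p-3)=(p-2)^2-1$, the conclusion $p\le 7g+2$ is equivalent (for $p>0$) to $(p-2)^2\le(7g)^2$, that is, to the quadratic bound
\[
\Delta_K''(1) \;\le\; \frac{49g^2-1}{24}.
\]
So the whole problem reduces to bounding $\Delta_K''(1)$ from above by a quadratic function of the genus for the knots in question.

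The hard part is precisely this last bound, and it is where the hypothesis must be used a second time: $\Delta_K''(1)$ is \emph{not} bounded in terms of $g$ for knots in general (already genus-one twist knots realize arbitrarily large values), so the existence of the chirally cosmetic pair has to be fed in through a finer invariant. Here I would turn to the Heegaard Floer correction terms $d(\,\cdot\,)$, which also reverse sign under orientation reversal. By the Ozsv\'ath--Szab\'o surgery formulas, the correction terms of $K(p)$ and of $K(p/2)$ agree with those of $L(p,1)$ and $L(p,2)$ outside a set of $\mathrm{Spin}^c$ structures of size $O(g)$, since the relevant local invariants $V_j$ vanish for $j\ge g$. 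The homeomorphism $K(p)\cong -K(p/2)$ then forces the correction-term profile of $L(p,1)$ to match that of $-L(p,2)\cong L(p,p-2)$ after at most these $O(g)$ modifications, and comparing the explicit (recursively computed) lens-space profiles forces $p$ to be linear in $g$. Pinning down the exact constant $7$ from this comparison---tracking precisely how many $\mathrm{Spin}^c$ structures can be corrected in the integral and the half-integral surgery together---is the main obstacle, and is what fixes the final coefficient in $p\le 7g+2$.
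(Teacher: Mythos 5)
Your first half is correct and is essentially the paper's own first half in disguise: equating $\lambda(K(p))=-\lambda(K(p/2))$ via the Casson--Walker surgery formula is exactly what underlies \cite[Theorem 2.1]{IchiharaItoSaito1}, which the paper invokes to get $6a_2(K)=p\bigl(s(1,p)+s(2,p)\bigr)$, i.e.\ $\Delta_K''(1)=\frac{(p-1)(p-3)}{24}$, and your reformulation of $p\le 7g+2$ as the quadratic bound $\Delta_K''(1)\le\frac{49g^2-1}{24}$ is the right reduction. (One small slip along the way: $|H_1(K(p/q))|=p$ for every $q$, so the two surgeries never differ in homology order; $p$ is odd simply because $p/2$ must be a slope in lowest terms.) You are also right, and it is a good observation, that this cannot be closed by genus alone, since genus-one twist knots have unbounded $a_2$.

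The genuine gap is precisely where you flag it: the bound $\Delta_K''(1)\le Cg^2$ with the right constant is never established, and the $d$-invariant comparison you sketch is not a proof, nor is it clear it can be made into one --- matching the correction-term profiles of $K(p)$ and $-K(p/2)$ against lens spaces up to $O(g)$ exceptional $\mathrm{Spin}^c$ structures is the sort of argument that in the literature produces \emph{lower} bounds on L-space slopes, and extracting the upper bound $p\le 7g+2$ from it is exactly the content you would need to invent. The paper's missing idea is simpler and different: since both slopes are positive, the hypothesis $K(p)\cong -K(p/2)$ forces both surgered manifolds to be L-spaces by \cite[Proposition 4.2(ii)]{IchiharaItoSaito2}, so $K$ is an L-space knot; then the Ozsv\'ath--Szab\'o structure theorem for Alexander polynomials of L-space knots (as used in \cite{WuZhongtao}) gives $\Delta_K''(1)=2\sum_{j=1}^{k}(-1)^{k-j}n_j^2$ with $0<n_1<\cdots<n_k$, Ni's fiberedness theorem \cite{YiNi} gives $n_k=g$, and the alternating sum telescopes to $a_2(K)\le g^2$. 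Combined with $a_2(K)=\frac{(p-1)(p-3)}{48}$ this yields $(p-2)^2\le 48g^2+1\le 49g^2$ for $g\ge 1$, hence $p\le 7g+2$. So the constant $7$ comes not from any delicate correction-term count but from the elementary inequality $\sqrt{48g^2+1}\le 7g$; without the L-space-knot input, your plan stalls at the step you yourself identify as the main obstacle.
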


When $K$ is the right-hand trefoil, the genus $g=1$ and the slopes are $9$ and $9/2$, where $9 \le 7 \cdot 1 + 2$.

On the other hand, the trefoil in $S^3$ is a torus knot and the exterior is homeomorphic to a Seifert fibered space with base orbifold $D^2(2,3)$. 
Concerning this fact, we have the following.

\begin{theorem}\label{thm2}
Let $K$ be a knot in an integral homology sphere with the exterior $E(K) \cong D^2(r,s)$ for some non-zero integers $r, s$.
For $p>0$, if $K(p) \cong - K(p/2)$, then $K=3_1$ in $S^3$ and $p=9$. 
\end{theorem}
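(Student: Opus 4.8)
The plan is to use the Seifert structure at every step. Because $E(K) \cong D^2(r,s)$, the knot $K$ is a fiber of a Seifert fibration of the ambient homology sphere $\Sigma$. Filling the meridian recovers $\Sigma$ and restores the fiber $K$, so $\Sigma$ is Seifert fibered over $S^2$ with at most three exceptional fibers and with $|H_1(\Sigma)| = 1$; hence $\Sigma$ is either $S^3$, in which case $K = T(r,s)$, or a Brieskorn homology sphere $\Sigma(r,s,t)$ in which $K$ is the exceptional fiber of some order $t$, with $r,s,t$ pairwise coprime. I would record the fiber slope on $\partial E(K)$ as $h = c\mu + t\lambda$ with $\gcd(c,t) = 1$ (and $t = 1$ in the $S^3$ case), and, following Moser's analysis of surgery along a fiber, note that for $p/q$ different from the fiber slope the manifold $K(p/q)$ is Seifert fibered over $S^2$ with exceptional fibers of orders $r$, $s$, and $\sigma_q = |pt - qc|$. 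The Seifert invariant of the new fiber I would compute by fixing a section $\xi$ with $\Delta(\xi, h) = 1$ and writing $p\mu + q\lambda = (pt - qc)\,\xi + \beta\, h$.

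Next I would match base orbifolds. An orientation-reversing homeomorphism preserves the underlying orbifold, so the multisets $\{r, s, \sigma_1\}$ and $\{r, s, \sigma_2\}$ coincide; since $r \ne s$ this forces $\sigma_1 = \sigma_2$, that is $2pt = 3c$, with common third order $\sigma := pt/3$. The equality $|H_1(K(p))| = |H_1(K(p/2))| = p$ then gives Euler numbers of equal magnitude, and the opposite signs of $pt - c = pt/3$ and $pt - 2c = -pt/3$ make them opposite, exactly as $K(p) \cong -K(p/2)$ requires.

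The crux is the orientation-reversing classification of these small Seifert fibered spaces: $K(p) \cong -K(p/2)$ holds precisely when, after negating every Seifert invariant of $K(p/2)$, the pairs $(\alpha_i, \beta_i \bmod \alpha_i)$ match, with permutations allowed only among fibers of equal order. The invariants at the $r$- and $s$-fibers come from $E(K)$ and so are the same for both surgeries; a fiber of order $\alpha$ can match its own negation only when $2\beta \equiv 0 \pmod{\alpha}$, i.e. $\alpha \le 2$. As $r, s \ge 2$ are coprime, exactly one of them — say $r$ — equals $2$, whose invariant $1/2$ is automatically self-conjugate, while the other, $s \ge 3$ and odd, cannot be self-conjugate. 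Its only possible partner is the new fiber, which forces $s = \sigma = pt/3$ and makes the two order-$s$ fibers interchange under the homeomorphism.

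Finally I would turn this interchange into a congruence. Matching the two order-$s$ fibers of $K(p)$ with the negatives of those of $K(p/2)$ rules out the diagonal pairing (it would again demand $s \mid 2$) and leaves $\delta_1 \equiv \delta_2 \pmod s$, where $\delta_1, \delta_2$ are the new-fiber invariants for $q = 1, 2$; the coefficient of the section $\xi$ changes sign between the two slopes, so after normalization this becomes a single congruence of the form $3a \equiv 2bp \pmod s$ in the section coefficients. Using $\Delta(\xi,h) = 1$ together with $pt = 3s$ and $c = 2s$, this reduces to $s \mid 3$ when $t = 1$, giving $s = 3$, and to the impossible $s \mid 1$ when $t = 3$; since $t \mid 3$ and $t$ is odd these are the only cases, so $t = 1$, $\Sigma = S^3$, $K = T(2,3) = 3_1$ and $p = 3s = 9$. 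I expect the main obstacle to be precisely this last stage: keeping the Seifert invariants, their behavior under orientation reversal, and the normalization of the newly created fiber mutually consistent, and checking that no Brieskorn-sphere fiber survives the congruence.
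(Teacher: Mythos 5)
Your route is genuinely different from the paper's. The paper never analyzes the surgered manifolds directly: it invokes Rong's classification \cite{Rong} of Seifert fibered spaces admitting such surgery pairs to force the unnormalized invariants of $E(K)$ into the shape $\left\{ \beta/\alpha, 1/2 \right\}$ (so the ``one of the two indices equals $2$'' conclusion, which you derive from the self-conjugacy condition $\alpha \mid 2\beta$, is imported wholesale), then cites the computation in the Appendix of \cite{IchiharaItoSaito1} to get explicit formulas for $p$ and $p/2$, whence $\alpha(2m+1)=3$ and $p=9$; finally it needs a separate geometric argument --- a degree-$6$ horizontal surface with $\chi=-1$, so $E(K)$ is a once-punctured torus bundle, then a Burde--Zieschang homology argument --- to conclude that $K$ is a trefoil. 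Your Moser-style computation, matching base orbifolds and negated Seifert invariants by hand, in effect re-proves the relevant cases of both citations, and it buys something real: identifying the ambient manifold and the knot simultaneously, since $t=1$ gives a Seifert homology sphere with at most two exceptional fibers, hence $S^3$ and $K=T(2,3)$, with no need for the horizontal-surface step.

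That said, two places need shoring up before this is a proof. First, everything from ``match base orbifolds'' onward presumes the surgered manifolds carry unique Seifert fibrations, so you must first dispose of the degenerate fillings: $\sigma_q=0$ (fiber-slope filling, giving a reducible manifold; easily excluded since $pt-c$ and $pt-2c$ cannot both vanish and reducible $\not\cong$ irreducible), $\sigma_1=\sigma_2=1$ (lens spaces; this forces $pt=3$, $c=2$, and the surviving scenario $p=1$, $t=3$ with $K(1)\cong -K(1/2)\cong S^3$ must be killed, e.g.\ by uniqueness of the $S^3$-filling of a torus-knot exterior), and the prism possibility $\{r,s,\sigma\}=\{2,s,2\}$, where the manifolds also fiber over $\mathbb{RP}^2$ and your fiber-matching dictionary is not automatic --- your later deduction $\sigma=s\ge 3$ excludes this, but it sits downstream of the step that assumed fibration uniqueness, so the order of quantifiers matters. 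Second, the crux congruence --- that interchanging the two order-$s$ fibers reduces to $s\mid 3$ when $t=1$ and to $s\mid 1$ when $t=3$ --- is asserted rather than computed, as you yourself flag. Its claimed output is consistent with the paper's formulas and with the trefoil example ($r=2$, $s=3$, $p=9$), so I expect the plan goes through, but as written the decisive arithmetic is an outline, not a verification.
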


\begin{proof}[Proof of Theorem~\ref{thm1}]
Let $K$ be a knot of genus $g$ in the 3-sphere $S^3$.

Suppose that $K(p) \cong - K(p/2)$ for a positive odd integer $p>0$.
Since both $p$ and $p/2$ are positive, by \cite[Proposition 4.2(ii)]{IchiharaItoSaito2}, the surgered manifolds $K(p)$ and $-K(p/2)$ are L-spaces.
That is, $K$ is an L-space knot.

Then, as shown in \cite[Proofs of Theorem 1.2 and 1.4]{WuZhongtao}, $\Delta''_K(1) \ne 0$, where $\Delta_K(t)$ denotes the Alexander polynomial of $K$.
Let $a_2(K)$ denote the second coefficient of the Conway polynomial of $K$.
Since $2 a_2(K) = \Delta''_K(1)$, it implies that $a_2(K) \ne 0$.
Thus we can apply the result \cite[Theorem 2.1]{IchiharaItoSaito1} to have the following.
\[
6 a_2(K) = p( s(1,p) + s(2,p) ).
\]
Here, $s(q, p)$ denotes the Dedekind sum for coprime integers $p, q$ with $p > 0$.
See \cite{IchiharaItoSaito1} for example.
Moreover, by direct calculations, we see that
\[
s(1,p) = \frac{(p-1)(p-2)}{12p} \; , \quad s(2,p) = \frac{(p-1)(p-5)}{24p}.
\]
It follows that
\[
a_2(K) = \frac{(p-1)(p-3)}{48}.
\]

On the other hand, we have the following claim.

\begin{claim}
$a_2(K) \le g^2$
\end{claim}

\begin{proof}
Since $K$ is an L-space knot in $S^3$, as shown in \cite[Proofs of Theorem 1.2 and 1.4]{WuZhongtao}, the following holds.
\[
\Delta''_K(1) = 2 \sum_{j=1}^k (-1)^{k-j} n_j^2
\]
with $0< n_1 < \cdots < n_k$ for some $k$.
Moreover, by \cite[Corollary 1.3]{YiNi}, $K$ is fibered and so, we have $n_k=g$.

If $k=1$, then
\[
a_2(K) = n_1^2 = g^2.
\]
Otherwise, from $ (-1) n_l^2 + n_{l-1}^2 < 0 $ for $l = k-1, k-3, \cdots$, we obtain that
\[
a_2 (K) = \frac{1}{2} \Delta''_K(1) = \sum_{j=1}^k (-1)^{k-j} n_j^2 < n_k^2 = g^2
\]
\end{proof}

Now, assume for a contradiction that $p > 7 g(K)+2$.
Then we would have
\[
a_2(K) = \frac{(p-1)(p-3)}{48} > \frac{49g^2-1}{48}.
\]
This contradicts $a_2(K) \le g^2$ shown in the claim above and $g \ge 1$.
\end{proof}

\begin{proof}[Proof of Theorem~\ref{thm2}]
Let $X$ be the exterior of $K$ which is a Seifert fibered space with orientable orbifold $B$ and
projection map $\pi:X \to B$. Let $\nu_1, \ldots , \nu_n$ be the singular points of $B$ and
$\delta_1, \ldots , \delta_n$ their pairwise disjoint regular neighborhoods respectively.
We set $X_0=\mathrm{cl} \left( X \setminus \bigcup_{i=1}^{n} \pi^{-1}(\delta_i) \right)$
and $B_0=\pi (X_0)=\mathrm{cl} \left( B\setminus \bigcup_{i=1}^{n} \delta_i \right)$.
Let $T_0, \ldots , T_n$ be boundary tori of $X_0$ defined by $T_0=\partial X$ and $T_i=\partial \pi^{-1}(\delta_i)$
$(i\ne 0)$.

Under the assumption that $K$ admits chirally cosmetic surgeries, 
it follows from \cite[Theorem 1]{Rong} that under some choice of the section $\sigma:B_0 \to X_0$,
the unnormalized Seifert invariants of singular fibers of $X$ are

\[
\left\{
\frac{\beta_1}{\alpha_1},\frac{\beta_2}{\alpha_2},-\frac{\beta_2}{\alpha_2}, \ldots ,
\frac{\beta_j}{\alpha_j},-\frac{\beta_j}{\alpha_j}, \frac{1}{2}, \ldots , \frac{1}{2}
\right\}, \
\mathrm{where}\ \frac{\beta_i}{\alpha_i}\ne 0, \frac{1}{2}\ (\bmod \ 1).
\]

Suppose that $j>1$ and let $h$ be a regular fiber of $X$ which generates a cyclic normal subgroup of $\pi_1(X)$.
For each boundary torus $T_i$, set $c_i=\sigma(B_0)\cap T_i$. Then we have 
\begin{eqnarray*}
H_1 (X) \cong \Bigg\langle c_0, c_1, \ldots, c_n, h
\left|
\begin{array}{l}
\alpha_1 c_1 - \beta_1 h =0, \ \alpha_{k+1} c_{2k} - \beta_{k+1} h =0, \\
\alpha_{k+1} c_{2k+1} + \beta_{k+1} h =0, \ 2c_l - h = 0, \\
c_0 + c_1 + c_2 + \cdots + c_n = 0
\end{array}
\right\rangle ,
\end{eqnarray*}

\noindent
where $k=1, \ldots , j-1$ and $l=2j, \ldots , n$.
In particular, we obtain $\alpha_{k+1} (c_{2k} + c_{2k-1}) = 0$ from
two relations $\alpha_{k+1} c_{2k} - \beta_{k+1} h =0$ and $\alpha_{k+1} c_{2k+1} + \beta_{k+1} h =0$.
Under the assumption that $K$ is a knot in an integral homology sphere, we see that $H_1(X)\cong \mathbb{Z}$,
and therefore, $H_1(X)$ has no torsion elements.
Hence we see that $c_{2k} = - c_{2k+1}$ $(k=1,2, \ldots , j-1)$.
Similarly, we see that $c_{2j} = c_{2j+1} = \cdots = c_n$ and $h=2c_n$. 
This implies:
\begin{eqnarray}\label{HomPre}
H_1 (X) \cong \Bigg\langle c_1, c_3, \cdots,c_{2j-1}, c_n
\left|
\begin{array}{l}
\alpha_1 c_1 - 2 \beta_1 c_n =0, \\
\alpha_{k+1} c_{2k+1} + 2 \beta_{k+1} c_n =0
\end{array}
\right\rangle .
\end{eqnarray}

\noindent
As in the assumption of Theorem~\ref{thm2}, we hereafter assume that the exterior $X$ is a Seifert fibered space with two singular fibers. 
Then the Seifert invariants of $X=E(K)$ are
\[
\left\{
\frac{\beta}{\alpha},\frac{1}{2}
\right\}, \
\mathrm{where}\ \frac{\beta}{\alpha}\ne 0, \frac{1}{2}\ (\bmod \ 1).
\]

\noindent
The arguments of \cite[Appendix]{IchiharaItoSaito1} show that if $K(p) \cong - K(p/2)$, then

\begin{eqnarray*}
p=\frac{\alpha^2(2m+1)}{\frac{1}{2}(\alpha(2m+1)-1)}, \ \ \ \
p/2=\frac{\alpha^2(2m+1)}{\frac{1}{2}(\alpha(2m+1)+1)}
\end{eqnarray*}

\noindent
for some integer $m$.
This implies that $\alpha ( 2m+1) = 3$.
Since $\alpha \ne \pm 1$, we see that $(\alpha , m) = (3,0)$ or $(-3,-1)$.
We therefore obtain $p = 9$ under the assumption that $p>0$.

We also see that $X$ is the exterior of a trefoil as follows. 
There is a horizontal normal surface $F$ in $X$ that is a degree 6 cover of the underlying orbifold $B$. 
See \cite[Proposition 3.7]{jackson2023recognition} for example. 
Then it follows from Riemann-Hurwitz formula that the Euler characteristic of $F$ is $-1$, i.e., $F$ is a once-punctured torus.
This means that $X$ is a once-punctured torus bundle over $S^1$.
Using the homology of $X$ as with Burde and Zieschang \cite{zbMATH03273014}, we see that $K$ is a trefoil.
\end{proof}

\begin{remark}
Let $M$ be a closed oriented 3-manifold and 
$K \subset M$ a knot such that its exterior is a Seifert fibered space 
with base orbifold a disk and more than two singular fibers. 
If $M$ is a rational homology sphere, then we can find such a knot $K$ 
which admits $K(p) \cong -K(p/2)$ for an integer $p$. 

To this end, let $X$ be a Seifert fibered space with base orbifold a disk 
such that the Seifert invariants are written as 
$\left\{ -1/3,1/5,-1/5,1/2 \right\}$
under some choice of the section. 
Then it follows from \eqref{HomPre} in the proof of Theorem~\ref{thm2} that 
\begin{eqnarray*}
H_1 (X) \cong 
\langle \, c_1, c_3, c_4 \ | \ 
3c_1=5c_3=-2c_4 \,
\rangle .
\end{eqnarray*}

Adding $z = c_1+c_3+c_4$ as a generator of $H_1(X)$, we see that 
$H_1(X) \cong \langle \, z \ | \ \textendash \, \rangle \cong \mathbb{Z}$ 
and $c_0 = 5z$ and $h = -30z$ in $H_1(X)$. 
We now let $\mu$ and $\lambda$ be simple closed curves in $\partial X$ 
such that $\mu = c_0$ and $\lambda = 6c_0 + h$ in $H_1(\partial X)$. 
Then the Dehn filling along $\mu$ yields a closed $3$-manifold $M$ with 
$H_1(M) \cong \mathbb{Z}/5 \mathbb{Z}$. 
Hence the core loop, say $K$,  of the filling solid torus is a knot in 
the rational homology sphere $M$, and $(\mu, \lambda)$ is a 
meridian-longitude pair of $K$ in $\partial E(K) = \partial X$. 
It follows from \cite[Theorem 1]{Rong} that after changing the basis 
in our setting, the following two slopes 
\[
\gamma_1=\dfrac{18m+9}{3m+1}, \ \ \gamma_2=\dfrac{18m+9}{3m+2}
\]

\noindent
give chirally cosmetic surgeries. In particular, the case that $m=0$ shows that 
$K(9) \cong -K(9/2)$. 

At this moment, the authors do not know whether there exists such a knot, except for a trefoil, 
when $M$ is an integral homology sphere. 
\end{remark}

\section{Chirally cosmetic exceptional surgery slopes}

In this section, we consider chirally cosmetic \textit{exceptional} surgeries on hyperbolic knots in the 3-sphere $S^3$, i.e., a pair of Dehn surgeries on a hyperbolic knot yielding orientation-reversingly homeomorphic non-hyperbolic 3-manifolds. 
We show that the surgery slopes for such surgeries are quite restricted, in particular, possible integral ones are only $\pm1, \pm 2, \pm 3$, or $\pm 4$. 
All these slopes actually appear for the figure-eight knot. 
As far as the authors know, only examples of chirally cosmetic exceptional surgeries are on the figure-eight knot.

\begin{theorem}\label{thm3}
Let $K$ be a hyperbolic knot in $S^3$.
Suppose that $K$ admits chirally cosmetic exceptional surgeries, i.e., $K(p/q) \cong - K(p/q')$ for $p,q>0$, $q,q'$ are coprime to $p$ with $q \ge q'$ and $K(p/q)$ is non-hyperbolic.
Then $q=-q'$ and $p/q= 1,2,3,4$, $1/2,1/3$ or $1/4$.
\end{theorem}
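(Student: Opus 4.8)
The plan is to use the orientation-reversing homeomorphism to force the two surgery slopes to be reflections of one another across the meridian, and then to cap the resulting distance by the universal bound for exceptional slopes. Throughout I use that $p=p'$, already noted above on homological grounds, so that the two slopes are $p/q$ and $p/q'$ with a common numerator, and that the exterior $E(K)$ is hyperbolic. For the local picture, note that since $K(p/q)$ is non-hyperbolic, Geometrization gives that it is reducible, toroidal, or a small Seifert fibered space; in every case the core $\gamma$ of the surgery solid torus is a knot in $K(p/q)$ whose exterior is precisely $E(K)$, and likewise for the core $\gamma'\subset K(p/q')$. Thus $K(p/q)$ and $K(p/q')$ each contain a distinguished knot whose complement is the fixed hyperbolic manifold $E(K)$.

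The heart of the argument is to upgrade the given homeomorphism $h\colon K(p/q)\to -K(p/q')$ to one carrying $\gamma$ to $\gamma'$. Granting this, $h$ restricts to an orientation-reversing self-homeomorphism $\bar h$ of $E(K)$, which by Mostow rigidity is realised by an isometry. On $H_1(\partial E(K))=\mathbb{Z}\mu\oplus\mathbb{Z}\lambda$ the map $\bar h_*$ fixes the canonical longitude $\lambda$ up to sign (it is the unique null-homologous slope) and has determinant $-1$; moreover $\bar h$ carries the $p/q$-filling slope to the $p/q'$-filling slope. Tracking the meridian then gives $q'\equiv -q\pmod p$, and controlling the off-diagonal part of $\bar h_*$, equivalently checking that $\bar h$ preserves the meridian so that it descends to a symmetry of the pair $(S^3,K)$ up to mirror image, yields $q'=-q$ exactly.

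Once $q'=-q$, the two slopes are $p/q$ and $-p/q$, whose distance is $\Delta(p/q,-p/q)=2pq$. Since both fillings are exceptional and $E(K)$ is hyperbolic, the bound of Lackenby and Meyerhoff, that any two exceptional slopes of a one-cusped hyperbolic $3$-manifold are at distance at most $8$, gives $2pq\le 8$, that is $pq\le 4$. Enumerating the coprime pairs $(p,q)$ with $p,q\ge 1$ and $pq\le 4$ produces exactly $p/q\in\{1,2,3,4,1/2,1/3,1/4\}$ (consistent with $q>0\ge q'$), completing the proof.

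The decisive difficulty is the rigidity step of the second paragraph, in two parts: matching the cores so that $h$ descends to $E(K)$, and then pinning the action on the meridian so as to improve $q'\equiv -q\pmod p$ to $q'=-q$. The first part must be argued case-by-case for the reducible, toroidal, and small Seifert fibered fillings, using that $\gamma$ and $\gamma'$ are characterised as the knots in the filling whose exterior is the prescribed hyperbolic manifold $E(K)$. I expect the second part to be where the hypothesis $K\subset S^3$ enters essentially: the linking form and the Casson--Walker surgery formula are each automatically consistent with $q'=-q$ but do not by themselves exclude the spurious congruence classes $q'=-q+kp$, so the conclusion genuinely relies on the geometric rigidity rather than on homological bookkeeping alone.
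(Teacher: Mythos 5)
Your proposal has a fatal gap at its central step: upgrading the homeomorphism $h\colon K(p/q)\to -K(p/q')$ to one carrying the core $\gamma$ of one filling solid torus to the core $\gamma'$ of the other. No such upgrade is available, and you do not prove it --- you only remark that it ``must be argued case-by-case'' for the reducible, toroidal, and Seifert fillings. Knots in such manifolds are emphatically not determined by their complements (the Gordon--Luecke theorem is special to knots in $S^3$), and small Seifert fibered or toroidal manifolds typically contain many non-isotopic knots with homeomorphic exteriors; indeed, the phenomenon you are analyzing --- two distinct fillings of one hyperbolic exterior yielding the same manifold --- is precisely the statement that the core is not canonical, so characterizing $\gamma$ and $\gamma'$ ``as the knots whose exterior is $E(K)$'' is circular. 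Worse, if your step were provable, then Mostow rigidity would give an orientation-reversing self-homeomorphism of $E(K)$, which (since by Gordon--Luecke any self-homeomorphism of a knot exterior in $S^3$ preserves the meridian up to sign) would force $K$ to be amphicheiral. That conclusion is far stronger than the theorem asserts, is not known to follow from the existence of chirally cosmetic surgeries, and is exactly why cosmetic surgery problems are hard. Everything downstream --- the congruence $q'\equiv -q \pmod p$ and its improvement to $q'=-q$ --- rests on this unproved assertion.

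The paper's proof avoids rigidity entirely and runs the logic in the opposite order: it first applies the Lackenby--Meyerhoff bound to the given pair of exceptional slopes, $\Delta(p/q,p/q')=p|q-q'|\le 8$, and then eliminates the non-symmetric cases by quantitative invariants. If $q'<0$ with $q'\ne -q$, then $|q-q'|\ge 3$ forces $p\le 2$, contradicting $p>2$ for chirally cosmetic pairs (Corollary 1.6 of the cited Ichihara--Ito--Saito paper). If $q'>0$, both slopes are positive, so the surgered manifolds are L-spaces and $K$ is an L-space knot (Proposition 4.1(ii) there), whence $a_2(K)\ne 0$ by Wu's argument; the Casson-invariant/Dedekind-sum classification (Corollary 2.2 of the other Ichihara--Ito--Saito paper) then leaves only the $p=7$ families with $\Delta=7(14s+3)>8$, a contradiction. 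Only $q'=-q$ survives, and then $2pq\le 8$ yields the seven slopes. Your first and last paragraphs do match the paper (the distance bound and the final enumeration $pq\le 4$ are correct), but the middle step --- the only substantive one --- would need to be replaced by this L-space/Casson argument; as written, the proof does not go through.
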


As stated above, all the integral ones in the theorem can be realized by the figure-eight knot.

Furthermore, also shown is the figure-eight knot is the only knot admitting such surgeries among all the hyperbolic alternating knots and Montesinos knots.

\begin{theorem}\label{thm4}
Let $K$ be a hyperbolic knot in $S^3$.
Suppose that $K$ is an alternating knot or a Montesinos knot and admits chirally cosmetic exceptional surgeries.
Then $K$ is the figure-eight knot and the surgeries slopes are $\pm 1, \pm 2, \pm 3, \pm 4$.
\end{theorem}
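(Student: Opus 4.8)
The plan is to intersect the slope restriction of Theorem~\ref{thm3} with the known complete classifications of exceptional Dehn surgeries on the relevant families of knots. By Theorem~\ref{thm3}, it suffices to examine a hyperbolic alternating or Montesinos knot $K$ that admits an exceptional surgery along a slope $p/q$ lying in the finite set $\{1,2,3,4,1/2,1/3,1/4\}$, whose chirally cosmetic partner is the surgery along $-p/q$; in particular both $K(p/q)$ and $K(-p/q)$ must be non-hyperbolic and satisfy $K(p/q)\cong -K(-p/q)$. I would first record that the three non-integral candidates $1/2,1/3,1/4$ are eliminated as soon as one knows that the exceptional surgeries on the knot in question are integral, so that only the integral slopes $\pm1,\pm2,\pm3,\pm4$ can survive.

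For the Montesinos case, which contains the two-bridge knots and hence the figure-eight knot, I would invoke the classification of exceptional surgeries on hyperbolic Montesinos knots. This should provide both the integrality of the exceptional slopes and an explicit short list of the knots admitting any exceptional surgery at all. Matching that list and its admissible slopes against $\{\pm1,\pm2,\pm3,\pm4\}$ should leave only the figure-eight knot. Since the figure-eight knot is amphichiral, the identities $K(r)\cong -K(-r)$ hold for every slope $r$, so each of the pairs $(\pm1,\mp1),(\pm2,\mp2),(\pm3,\mp3),(\pm4,\mp4)$ is genuinely chirally cosmetic, recovering exactly the surgeries in the statement.

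For the alternating case, the two-bridge subfamily is covered by the Brittenham--Wu classification, which again yields integral slopes and isolates the figure-eight knot. It then remains to treat alternating knots that are neither two-bridge nor Montesinos, and here I would apply the available restrictions on exceptional (reducible, toroidal, or Seifert fibered) surgeries on alternating knots to bound their exceptional slopes and to show that no such knot admits exceptional surgeries simultaneously along $p/q$ and $-p/q$ for a value $p/q$ in the list. The main obstacle I expect is precisely this last step: unlike the Montesinos and two-bridge families, the general alternating family has no single off-the-shelf classification, so one must assemble slope-integrality and slope-range results and then exploit the chirally cosmetic constraint $K(p/q)\cong -K(-p/q)$ to force the surgery data to be symmetric under orientation reversal, which should ultimately pin $K$ down to the figure-eight. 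A final, genuinely orientation-sensitive check is that for each surviving non-figure-eight candidate the two surgered manifolds are \emph{not} orientation-reversingly homeomorphic; verifying this requires comparing oriented invariants, for instance Seifert invariants or the linking form, rather than merely counting exceptional slopes.
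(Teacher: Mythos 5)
Your overall strategy---intersect the slope constraints of Theorem~\ref{thm3} with classifications of exceptional surgeries on these families, then hit any surviving candidate with an orientation-sensitive invariant---is exactly the paper's. But your Montesinos paragraph contains a step that fails as stated: matching the classified slopes against $\{\pm1,\pm2,\pm3,\pm4\}$ does \emph{not} leave only the figure-eight knot. The Montesinos knot $M(-1/2,1/3,2/7)$ (the knot $10_{132}$) has exceptional Seifert slopes $-2,-1,0,1$, so the pair $\{1,-1\}$ survives Theorem~\ref{thm3}, and no amount of slope-counting eliminates it. The paper kills this candidate precisely by the oriented check you mention only at the very end: the finite type invariant $v_3(10_{132})=-5\ne 0$ obstructs $\pm1$-surgeries from being cosmetic (by Ito's criterion), and alternatively Meier's computation identifies the two surgered manifolds as the Seifert fibered spaces $S^2(1/3,1/4,-4/7)$ and $S^2(1/2,1/3,-16/19)$, which are not homeomorphic. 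Your closing sentence prescribes the right remedy, but you attach it to the alternating family while asserting that the Montesinos matching closes on its own. Note also that the Montesinos case is not one off-the-shelf classification: the paper assembles Wu's bound that the length is at most three, the absence of reducible surgeries, the absence of toroidal Seifert slopes (Ichihara--Jong), the Ichihara--Masai table of Seifert slopes, and Wu's classification of knots with two toroidal slopes; the last produces two further candidates, $P(-3,3,7)$ with toroidal slopes $\{0,1\}$ and $M(-2/3,1/3,1/4)$ with $\{12,13\}$, both eliminated by Theorem~\ref{thm3} since neither pair has the form $\{p/q,-p/q\}$. (Your appeal to integrality of exceptional slopes is also slightly off for Montesinos knots---$P(-2,3,7)$ carries the half-integral Seifert slope $37/2$---though this is harmless, as no slope $1/q$ with $q\ge 2$ occurs in any of the lists.)

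Conversely, the obstacle you anticipate in the alternating case does not exist. Ichihara--Masai's Corollary~1.2, which the paper cites, is a \emph{complete} classification of exceptional surgeries on all hyperbolic alternating knots---the figure-eight knot, the two-bridge knots $K_{[2n,\pm2]}$ and $K_{[b_1,b_2]}$, and certain pretzel knots, each with an explicit slope list---so no assembling of slope-integrality and slope-range results for general alternating knots is needed: intersecting those lists with Theorem~\ref{thm3} immediately isolates the figure-eight knot, with no residual non-figure-eight survivors in this family at all. Thus the genuinely hard residual work sits in the Montesinos case, exactly opposite to where your proposal locates it. With these two corrections---citing the complete alternating classification, and supplying the $v_3$ or Seifert-invariant computation for $M(-1/2,1/3,2/7)$---your plan becomes the paper's proof; as written, it has a genuine gap at the one knot the slope analysis cannot exclude.
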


\begin{remark}
It is shown in \cite{Ravelomanana} 
that purely cosmetic and exceptional surgeries on hyperbolic knots in $S^3$ are quite restricted.
In particular, algebraic knots, alternating hyperbolic knots, and arborescent knots are shown to have no purely cosmetic exceptional surgery.
\end{remark}

It is a natural question if the other knot can admit such surgeries.
This is related to a question if any hyperbolic amphicheiral knot can have non-longitudinal exceptional surgeries, which would be still open.
The next is an immediate corollary of the theorem above.

\begin{corollary}\label{cor5}
Among hyperbolic alternating knots and Montesinos knots, the figure-eight knot is the only knot which is amphicheiral and admits non-longitudinal exceptional surgeries.
\end{corollary}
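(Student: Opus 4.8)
The plan is to deduce this directly from Theorem~\ref{thm4}, the point being that for an amphicheiral knot a non-longitudinal exceptional surgery \emph{automatically} gives rise to a chirally cosmetic exceptional surgery. The corollary is a biconditional within the stated class of knots, so there are two things to check. The converse direction, that the figure-eight knot actually has all the listed properties, is already recorded in the introduction and in Section~\ref{sec1}: it is hyperbolic, alternating, amphicheiral, and its exceptional slopes $\pm1, \pm2, \pm3, \pm4$ are all non-longitudinal. So the content lies in the forward direction, which I would obtain from Theorem~\ref{thm4}.

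First I would recall the standard behaviour of Dehn surgery under mirroring: if $K^\ast$ denotes the mirror image of $K$, then $K^\ast(p/q) \cong - K(-p/q)$ as oriented manifolds, since mirroring $S^3$ preserves the meridian but reverses both the ambient orientation and the sign of the slope. When $K$ is amphicheiral, $K$ is ambient isotopic to $K^\ast$, and hence $K(p/q) \cong K^\ast(p/q) \cong - K(-p/q)$ for every slope $p/q$. Now suppose $K$ is a hyperbolic alternating or Montesinos knot which is amphicheiral and admits a non-longitudinal exceptional surgery. Non-longitudinal means the slope $p/q$ has $p \ne 0$, and after possibly replacing $K$ by its mirror (equivalently, reversing the ambient orientation) I may assume $p>0$ with $q>0$; exceptional means $K(p/q)$ is non-hyperbolic. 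Setting $q' = -q$, the mirroring relation reads $K(p/q) \cong - K(p/q')$, where the two slopes $p/q$ and $p/q'$ are genuinely distinct because $p \ne 0$, and where $q = -q'$ and $K(p/q)$ is non-hyperbolic. This is exactly a chirally cosmetic exceptional surgery in the sense of Theorem~\ref{thm4}.

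Finally I would invoke Theorem~\ref{thm4} to conclude that $K$ must be the figure-eight knot, which completes the forward direction and hence the corollary. The only step requiring care is the bookkeeping of orientations and slope signs: one must confirm that the relation $K(p/q) \cong -K(-p/q)$ produced by amphicheirality fits the precise hypotheses under which Theorems~\ref{thm3} and~\ref{thm4} are stated (in particular the conventions $p>0$, $q \ge q'$, and the coprimality of $q, q'$ with $p$). Once these conventions are matched, no genuinely difficult step remains, and the corollary follows immediately.
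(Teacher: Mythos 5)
Your proposal is correct and matches the paper's intended argument: the paper states Corollary~\ref{cor5} as an immediate consequence of Theorem~\ref{thm4}, using exactly the observation (recorded in Section~\ref{sec1}) that amphicheirality gives $K(r) \cong -K(-r)$ for all $r$, so a non-longitudinal exceptional slope $p/q$ with $p \ne 0$ produces a genuine chirally cosmetic exceptional pair $\{p/q,\, p/(-q)\}$ fitting the hypotheses of Theorem~\ref{thm4}. Your careful bookkeeping of orientations, slope signs, and the conventions $p>0$, $q \ge q'$ is precisely the routine verification the paper leaves implicit.
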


Among the exceptional slopes for the figure-eight knot, $\pm 4$ are toroidal and the others are all Seifert surgeries.
These Seifert fibered manifolds obtained are all small (atoroidal) Seifert fibered with infinite fundamental groups.

About the type of the manifolds obtained by chirally cosmetic exceptional surgeries, we have the following.

\begin{corollary}\label{cor6}
Let $K$ be a hyperbolic knot in $S^3$.
Suppose that $K$ admits chirally cosmetic exceptional surgeries.
Then the obtained manifolds are irreducible and have infinite fundamental groups.
Furthermore if the obtained manifolds are toroidal, then the surgery slopes are $\{ \pm 4 \}$, which actually occur for the figure-eight knot, or $\{ \pm 1 \}$.
\end{corollary}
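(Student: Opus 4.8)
The plan is to combine the slope restriction of Theorem~\ref{thm3} with the geometric classification of exceptional surgeries. By Theorem~\ref{thm3} the two surgery slopes form a pair $\{r,-r\}$ with $r \in \{1,2,3,4,1/2,1/3,1/4\}$, and since $K(p/q)\cong -K(p/q')$ the two surgered manifolds are orientation-reversingly homeomorphic, hence mirror images and so of the same type. As $K$ is hyperbolic, a non-hyperbolic filling is, by geometrization, either reducible, toroidal, or a small (atoroidal) Seifert fibered space, the last possibly with finite fundamental group. Since $|r|\le 4$ for every slope in the list, I would split the argument into three independent tasks: excluding reducible fillings (which gives irreducibility), excluding finite $\pi_1$ fillings (which, together with the observation that a nontrivial connected sum already has infinite $\pi_1$, gives the infinite $\pi_1$ assertion), and locating the toroidal slopes.

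For the infinite $\pi_1$ assertion I would show that no slope in the list can be a finite surgery. By the Cyclic/Finite Surgery Theorem a finite filling slope is integral or half-integral, so its distance to the meridian is at most $2$, which immediately discards $1/3$ and $1/4$. For the remaining slopes $1,2,3,4,1/2$ I would invoke the classification of finite (spherical) surgeries on hyperbolic knots in $S^3$: all known finite surgery slopes have absolute value well above $4$, whereas every slope here satisfies $|r|\le 4$. Hence no finite filling occurs. Since a nontrivial reducible manifold has a free-product (or $\mathbb{Z}$) fundamental group, every reducible filling also has infinite $\pi_1$, so this settles the infinite $\pi_1$ statement outright.

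For irreducibility I would first use the theorem of Gordon--Luecke that a reducible surgery on a knot in $S^3$ is integral, removing $1/2,1/3,1/4$ at once. It then remains to rule out a reducible filling at $\pm1,\pm2,\pm3,\pm4$. Here I would use that a reducible surgery produces a connected sum containing a nontrivial lens space summand $L(a,b)$ ($a\ge 2$): for $p=1$ the filling is an integral homology sphere and so admits no such summand, a contradiction; for $p=2,3,4$ I would combine the restricted summand structure known toward the cabling conjecture with the mirror relation $K(p)\cong -K(-p)$ and the fact that a hyperbolic knot is not a cable. I expect this integral reducible case to be \emph{the main obstacle}: unlike the finite case there is no clean slope bound to cite, so one must exploit the precise structure of reducible surgeries together with the chirally cosmetic hypothesis.

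Finally, for the toroidal fillings I would use that a toroidal surgery slope on a hyperbolic knot has distance at most $2$ from the meridian and is therefore integral or half-integral, leaving $\pm1,\pm2,\pm3,\pm4$ and $\pm1/2$ as candidates. The half-integral case is handled by the Eudave-Mu\~noz classification: a hyperbolic knot has at most one half-integral toroidal slope, so the mirror pair $\{1/2,-1/2\}$ cannot both be toroidal, excluding $\pm1/2$. To remove $\pm2$ and $\pm3$ I would analyze the essential torus and its JSJ pieces under the orientation-reversing homeomorphism, using the known descriptions of integral toroidal surgeries to show that a chirally cosmetic toroidal pair forces the slope to be $\pm1$ or $\pm4$, the latter being realized by the figure-eight knot. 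Pinning down $\pm2,\pm3$ is the second delicate point, since integral toroidal surgeries are not excluded by distance alone and the chirality constraint must be used in an essential way.
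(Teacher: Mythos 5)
Your overall skeleton (restrict to the slope list of Theorem~\ref{thm3}, then exclude reducible fillings, exclude finite $\pi_1$, and locate the toroidal slopes) matches the paper, but the proposal has genuine gaps precisely at the two places you yourself flag as ``delicate points,'' and the missing idea is the single mechanism the paper's proof runs on: since $K(p/q) \cong -K(p/q')$, the two fillings are homeomorphic up to orientation, so any property of one (reducibility, cyclic or finite fundamental group, toroidality) is automatically shared by the other. Hence one always has a \emph{pair} of slopes of the given type, and every pair $\{r,-r\}$ in Theorem~\ref{thm3}'s list has distance $2pq \ge 2$, equal to $2$ only for $\{\pm 1\}$ and at least $4$ otherwise. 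This converts the whole corollary into citations about pairs of exceptional slopes: two reducible slopes, or two slopes with cyclic $\pi_1$, have distance at most one (\cite{GordonLuecke1987}, \cite{CGLS}), which kills reducibility and cyclic groups for all seven pairs at once --- in particular your ``main obstacle,'' the integral reducible case $p = 2,3,4$ that you leave unresolved, simply disappears; pairs of slopes yielding finite noncyclic groups are classified in \cite[Theorem 1.4(3)]{NiZhang} and contain none of the seven pairs; and pairs of toroidal slopes at distance at least $4$ are completely classified in \cite{GordonWu}, which uniformly disposes of $\{\pm 2\}, \{\pm 3\}, \{\pm 1/2\}$ (and the higher-distance pairs) --- exactly the step you admit you cannot complete --- leaving only $\{\pm 1\}$, whose distance $2$ escapes the classification, and the figure-eight's $\{\pm 4\}$.

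Concretely, three steps of your proposal do not close. First, the finite case rests on ``all known finite surgery slopes have absolute value well above $4$'': an appeal to known examples is not a theorem, and nothing you cite excludes, say, a finite filling at slope $\pm 1$ --- a priori the Poincar\'e sphere, with trivial $H_1$, is a candidate there; one needs an actual classification (the paper uses \cite{NiZhang}; in a single-slope approach you would additionally need Ghiggini-type results for $\pm 1$). Second, for irreducibility at integral slopes your lens-space-summand homology argument works only for $p=1$, and the sketch ``combine the summand structure toward the cabling conjecture with the mirror relation'' for $p=2,3,4$ is not an argument. Third, for the toroidal case your Eudave-Mu\~noz step handles $\pm 1/2$ plausibly, but for $\pm 2, \pm 3$ you offer only an intention to ``analyze the essential torus and its JSJ pieces,'' with no mechanism; the chirality constraint enters the paper's proof not through any JSJ analysis but through the fact that it forces \emph{two} toroidal slopes at distance $\ge 4$, at which point \cite{GordonWu} applies. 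Your correct individual ingredients (Gordon--Luecke integrality of reducible slopes, $q \le 2$ bounds for finite and toroidal slopes) are all subsumed by this pairwise argument.
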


\begin{proof}[Proof of Theorem~\ref{thm3}]
Let $K$ be a hyperbolic knot in $S^3$.
Suppose that $K(p/q) \cong - K(p/q')$ for $p,q>0$, $q,q'$ are coprime to $p$ with $q \ge q'$ and $K(p/q)$ is non-hyerpbolic. 

For exceptional slopes $p/q$ and $p/q'$, by the result of \cite{LackenbyMeyerhoff}, the distance $\Delta (p/q, p/q')$ between them is at most eight, i.e.,
\[
\Delta (p/q, p/q') = p | q' - q | \le 8 .
\]
Now we consider two cases $q' >0$ and $q' < 0$.

Suppose that $q' <0$.
Then, for coprime integers $q,q'$, we have $| q' - q | \ge 3$.
It implies that $p \le 2$, which contradicts to \cite[Corollary 1.6]{IchiharaItoSaito2} which says that $p>2$.

Suppose that $q' >0$.
Then, by \cite[Proposition4.1(ii)]{IchiharaItoSaito2}, 
the obtained manifold is an L-space, i.e., $K$ is an L-space knot.
It implies that $a_2(K) \ne 0$ by \cite{WuZhongtao}. 
From $p \le 8$ together with $a_2(K) \ne 0$, we can apply \cite[Corollary 2.2]{IchiharaItoSaito1} 
to obtain the following:
\begin{itemize}
\item $p=7$, $q=7s+1$, $q'=-7s-2$  $(s\in \mathbb{Z})$, or 

\item $p=7$, $q=7s+2$, $q'=-7s-1$  $(s\in \mathbb{Z})$.
\end{itemize}
In both cases, we have
\[
\Delta (p/q, p/q') = p | q' - q | = 7 ( 14 s + 3 ) \qquad (s \in \mathbb{Z})
\]
which contradicts $\Delta (p/q, p/q') \le 8$.

As a consequence, $q' = -q$ must hold.
Then, by $ p | q' - q | \le 8 $ and $p,q$ are coprime, we obtain $p/q= 1,2,3,4$, $1/2,1/3$ or $1/4$.
\end{proof}

\begin{proof}[Proof of Theorem~\ref{thm4}]
Suppose first that a hyperbolic alternating knot $K$ admits chirally cosmetic exceptional surgeries, that is, $K(p/q) \cong - K(p/q')$ for $p>0$, $q,q'$ are coprime to $p$ with $q \ge q'$ and $K(p/q)$ is non-hyperbolic.
Then, by \cite[Corollary 1.2.]{IchiharaMasai}, 
one of the following holds.

\begin{itemize}
\item
$K$ is equivalent to the figure-eight knot, and the exceptional slopes are;
\[ -4, -3, -2,-1, 0, 1, 2, 3, 4. \]
\item
$K$ is equivalent to a two bridge knot $K_{[2n, 2]}$ (resp. $K_{[2n, -2]}$) with $ | n | > 2$, and the exceptional slopes are;
\[ -4, -3, -2,-1, 0. \qquad (\text{resp. } 0, 1, 2, 3, 4 .) \]
\item
$K$ is equivalent to a two bridge knot $K_{[b_1,b_2]}$ with $ | b_1 | , | b_2 | > 2$ and both $b_1$ and $b_2$ are even (resp. $b_1$ is odd and $b_2$ is even), and the exceptional slopes are $ 0$ (resp. $ 2 {b_2}$) only.
\item
$K$ is equivalent to a pretzel knot $P(q_1,q_2,q_3)$ with $q_j \ne 0, \pm 1$ for $j = 1,2,3$ and $q_1,q_2,q_3$ are all odd (resp. $q_1$ is even and $q_2,q_3$ are odd), and the exceptional slopes are $ 0$ (resp. $2(q_2 + q_3)$) only.
\end{itemize}

This shows that the only possibility for $K$ is the figure-eight knot by Theorem~\ref{thm3}.
In fact, the figure-eight knot is amphicheiral, and so, $\pm 1, \pm 2, \pm 3, \pm 4$-surgeries are actually chirally cosmetic and exceptional.

Suppose next that a hyperbolic Montesinos knot $K$ admits chirally cosmetic exceptional surgeries, that is, $K(p/q) \cong - K(p/q')$ for $p>0$, $q,q'$ are coprime to $p$ with $q \ge q'$ and $K(p/q)$ is non-hyperbolic.

Then the length of $K$ must be at most three by \cite[Theorem 3.6]{WuYingQing1996} 
and if $K$ is alternating, then the above arguments assure that $K$ is the figure-eight knot.
Thus, in the following, we assume that $K$ is non-alternating, which implies the length of $K$ is just three, i.e., $K = M ( p_1/q_1, p_2/q_2, p_3/q_3 )$ with $q_j \ge 2$ for $j=1,2,3$.
Also, since Montesinos knots have no reducible surgeries by \cite[Corollary 2.6]{WuYingQing1996}, 
each exceptional surgery slope for $K$ is a toroidal slope or a Seifert slope.
Remark that there are no toroidal Seifert slopes for Montesinos knots other than the trefoil \cite[Theorem 1.1]{IchiharaJong}. 

In the case that $K$ admits Seifert surgeries, by \cite[Appendix B]{IchiharaMasai}, 
one of the following holds.

\begin{table}[htb]
\caption{Montesinos knots with Seifert surgeries}\label{tab:my_label}%
\begin{tabular}{c|c}
Montesinos knot& exceptional slopes \\
\hline
$P(-2, 3, 2n+1)$ with $n \ne -1,0,1,2$ & $4n+6, 4n+7, 4n+8$ \\
$P(-2, 3, 7)$ & $16, 17, 18, 37/2, 19, 20$ \\
$P(-3, 3, 3)$ & $0, 1, 2$ \\
$P(-3, 3, 4)$ & $0, 1$ \\
$P(-3, 3, 5)$ & $0, 1$ \\
$P(-3, 3, 6)$ & $0, 1$ \\
$M(-1/2, 1/3, 2/5)$ & $3, 4, 5, 6$ \\
$M(-1/2, 1/3, 2/7)$ & $-2, -1, 0, 1$ \\
$M(-1/2,1/3,2/9)$ & $2, 3, 4, 5$ \\
$M(-1/2, 1/3, 2/11)$ & $-3, -2, -1, 0$ \\
$M(-1/2, 1/5, 2/5)$ & $7, 8, 9$ \\
$M(-1/2, 1/7, 2/5)$ & $11, 12$ \\
$M(-2/3, 1/3, 2/5)$ & $-6, -5, -4$\\
\end{tabular}
\end{table}

Due to Theorem~\ref{thm3}., this implies that the only possibility for $K$ is the Montesinos knot $M(-1/2, 1/3, 2/7)$ with slopes $\pm 1$.
For this knot, it is shown that $\pm 1$-surgeries are not chirally cosmetic in several ways.
For example, this knot is known as $10_{132}$ in the knot table.
Such a knot can be checked by using a computer whether it has cosmetic surgeries based on the method developed in \cite{FuterPurcellSchleimer}. 
Also see \cite[Remark 1.14]{IchiharaItoSaito2}. 
It is also checked by using finite type invariants.
Actually, by direct computations, we see that the value of $v_3$ for the knot is $-5 \ne 0$, and so, $\pm 1$-surgeries are not cosmetic by \cite[Corollary 1.3(ii)]{Ito}. 
More concretely, in \cite[Theorem 1.4]{Meier}, 
the following is obtained.
\begin{align*}
M(-1/2, 1/3, 2/7) (-1) & \cong S^2(1/3,1/4,-4/7) \\
M(-1/2, 1/3, 2/7) (1) & \cong S^2(1/2,1/3,-16/19)
\end{align*}
Thus they are not homeomorphic.
It concludes that $K$ has no chirally cosmetic exceptional surgeries.

In the case that $K$ has only toroidal slopes, we also see that $K$ has no chirally cosmetic exceptional surgeries as follows.
It suffices to consider that $K$ has at least two toroidal slopes (and no Seifert slopes).
Then, by \cite[(3) in page 308]{WuYingQing2011} 
$K$ admits exactly two toroidal surgeries, and $K$ is equivalent to one of five knots.
Three of the five admit Seifert slopes, and so, already appear in the above table.
The remaining are the following.
\begin{itemize}

\item
$K$ is equivalent to $P(-3,3,7)$ and the exceptional (i.e., toroidal) slopes are;
\[
0, 1.
\]

\item
$K$ is equivalent to $M(-2/3, 1/3, 1/4)$, and the exceptional (i.e., toroidal) slopes are;
\[
12, 13.
\]

\end{itemize}
These pairs of slopes cannot be chirally cosmetic by Theorem~\ref{thm3}, and so, both knots have no chirally cosmetic exceptional surgeries.
\end{proof}

\begin{proof}[Proof of Corollary~\ref{cor6}]
Let $K$ be a hyperbolic knot in $S^3$ which admits chirally cosmetic exceptional surgeries, i.e., $K(p/q) \cong - K(p/q')$ for $p,q>0$, $q,q'$ are coprime to $p$ with $q \ge q'$ and $K(p/q)$ is non-hyperbolic.

It is known by \cite{GordonLuecke1987} and \cite{CGLS} that if the manifolds obtained by two Dehn surgeries on a knot are reducible or have cyclic fundamental groups, then the distance between the surgery slopes is at most one.
Thus, by Theorem~\ref{thm3}, the manifolds $K(p/q)$ and $K(p/q')$ are irreducible and do not have cyclic fundamental groups.

In the case that the obtained manifolds have the finite but noncyclic fundamental groups, then such a pair of surgery slopes are classified in \cite[Theorem 1.4(3)]{NiZhang}.
The list does not contain any of the slope pairs given in Theorem~\ref{thm3}.
This implies that $K(p/q)$ and $K(p/q')$ have the infinite fundamental groups.

Suppose next that the obtained manifold is toroidal.
Note that, in the list of the slopes given in Theorem~\ref{thm3}, only $\{ \pm 1 \}$ are of distance 2, and the others have distance at least 4.
On the other hand, in \cite{GordonWu}, toroidal surgeries on hyperbolic knots in $S^3$ with distance at least 4 are completely classified.
Among them, we see that only $\{ \pm 4 \}$-surgeries on the figure-eight knot can be chirally cosmetic due to Theorem~\ref{thm3}, and actually they are.
\end{proof}

\section*{Acknowledgments}
The authors thank Tetsuya Ito for useful comments on Theorem~\ref{thm1} and Masakazu Teragaito for comments on Corollary~\ref{cor5}.
This work was supported in part by JSPS KAKENHI Grant Numbers JP22K03301 and JP21K03244.

\bibliographystyle{plain}
\bibliography{IchiharaSaito}

\end{document}